\documentclass[11pt]{amsart}
\usepackage{geometry,a4wide,amssymb,amsfonts,amsrefs,amsmath}                
\usepackage{setspace}
\usepackage[normalem]{ulem}

\usepackage{bbm,xcolor}
\usepackage{dsfont}
\usepackage{yfonts}
\usepackage[greek, english]{babel}
\usepackage{mathtools}
\usepackage{mathrsfs}
\usepackage{hyperref}  
\usepackage{footmisc} 
\usepackage{comment}

\newcommand{\R}{\mathbb{R}}
\newcommand{\fr}{\mathfrak{R}}
\newcommand{\p}{\mathbb{P}}
\newcommand{\T}{\mathbb{T}}
\newcommand{\N}{\mathbb{N}}
\newcommand{\V}{\mathbb{V}}

\newcommand{\Z}{\mathbb{Z}}

\newcommand{\M}{\mathcal{M}}

\newcommand{\E}{\mathbb{E}}

\newcommand{\e}{\mathcal{E}}
\newcommand{\I}{\mathcal{I}}

\newcommand{\cpy}{\mathtt{cap}}
\newcommand{\dom}{\mathtt{dom \;}}
\newcommand{\bp}{{\mathbf{p}}}
\newcommand{\bq}{{\mathbf{q}}}

\theoremstyle{plain}

\newtheorem{thm}{Theorem}[section]
\newtheorem{lem}[thm]{Lemma}
\newtheorem{prop}[thm]{Proposition}
\newtheorem{cor}[thm]{Corollary}

\newtheorem{example}[thm]{Example}
\newtheorem{rmk}[thm]{Remark}

\newtheorem{manualtheoreminner}{Theorem}
\newenvironment{manualtheorem}[1]{%
  \renewcommand\themanualtheoreminner{#1}%
  \manualtheoreminner
}{\endmanualtheoreminner}

\title[On perturbations preserving the connectivity properties in tree percolations]{On perturbations that preserve the connectivity properties in tree percolations}

\author{\small{Mirmuhsin Maxmud and Ville Suomala}}
\address{\small{Research Unit of Mathematical Sciences, The University of Oulu}}
\email{Mirmukhsin.Makhmudov@oulu.fi, Ville.Suomala@oulu.fi}

\begin{document}

\begin{abstract}
   We consider a general bond percolation on an infinite locally finite tree, where the edge retention probabilities $p_e$ are replaced by $\min\{1,q_{|e|}p_e\}$, where  $\{q_n\}_{n\ge 1}$ is a sequence of positive perturbation factors and $|e|$ denotes the distance between the edge $e$ and the root. 
   If the original percolation model admits infinite clusters, it is of interest to investigate under which perturbations $0<q_n\le 1$ this connectivity property is preserved. Conversely, if the original percolation does not admit infinite clusters, we are led to study the stability of such a property under perturbations satisfying $q_n\ge 1$.

   In both cases, under minimal assumptions on the original model, we show that the percolative behaviour is stable against certain quantitative non-trivial perturbations. 
   We also discuss an application of our results to the Erd\H{o}s similarity conjecture for Cantor sets.
\end{abstract}

\subjclass[2020]{Primary 60K35; Secondary 82B43, 60J80, 31C20, 28A80}

\thanks{Both authors were supported by the Research Council of Finland via the project “\emph{Fractals and randomness}”, grant no. 368817.
}

\maketitle

\section{Introduction}

Percolation models are arguably the simplest models in statistical physics that can undergo phase transitions.
A percolation model is generally defined on any infinite graph. 
The most studied cases are the lattices such as $\Z^d$ for $d\geq 1$ and trees.
In fact, percolation models on high-dimensional lattices around their criticality point behave similarly to the ones on regular trees, as they both exhibit mean-field behaviour. Thus, they belong to the same universality class \cite{HS1990}. 

To define a (bond) percolation model with parameters $\bp=\{p_e\}_{e\in \E}\subset [0,1]$ on an infinite connected graph $\T=(\V, \E)$, one retains an edge $e\in \E$ with probability $p_e$ and removes it with probability $1-p_e$, independently of all other edges.
A fundamental question is then the existence, or lack thereof, of infinite \textit{clusters} -- infinite connected components -- in the retained random subgraph of $\T$.
Formally, this question concerns the probability of the event 
$$
\mathcal{I}:=\{\omega\subset \E: \text{the subgraph } (\V,\omega)\subset \; \T \text{ has an infinite connected component}\}
$$
on the percolation configuration space $\Omega:=\{0,1\}^\E$ under the probability measure $\p_{\bp}:=\prod_{e\in\E}(p_e \delta_1 + (1-p_e)\delta_0)$.
Another relevant cluster event, associated with a vertex $v\in\V$, is defined by
\begin{equation*}
    \I_v:=\{\omega\in\Omega: \# \mathcal{C}_v(\omega)=\infty\},
\end{equation*}
where $\mathcal{C}_v(\omega)$ denotes the cluster of $v$, defined as the connected component (subgraph) of $(\V, \omega) \subseteq \T$ containing $v$. Note that we often identify the connected subgraphs of $\T$ with their corresponding vertex set.
Then $\I=\bigcup_{v\in\V}\I_v$. 
For a connected graph $\T$ and positive edge retaining probabilities $\bp\subset(0,1]$, one can readily verify that $\p_{\bp}(\I)=1$ if and only if $\p_\bp(\I_v)>0$ for all $v\in\V$.

In this note, we consider general independent percolation models on a locally finite infinite tree $\T=(\V,\E)$, with edge retention probabilities $\bp:=\{p_e\}_{e\in\E}\subset [0,1]$. We pick one designated vertex $o$ as the root of the tree. 
Without loss of generality, we assume throughout the note that $\bp\subset(0,1]$. 
Note that for a tree $\T=(\V,\E)$, its \emph{boundary} $\partial \T$ is defined as the set of all rays, where a \emph{ray} is a simple path starting from the root that cannot be extended. 
Given $v,u\in \V$, let $[v\longleftrightarrow u]$ denote the event $u\in \mathcal{C}_v(\omega)$. If  $V\subset\V$, we let $[v\longleftrightarrow V]=\bigcup_{u\in V}[v\longleftrightarrow u]$. 
We also extend this notation for subsets of the boundary; in particular, $[v\longleftrightarrow \partial\T]$ is the event $\I_v$, provided that the tree $\T$ does not have finite rays.
Utilising the boundary, we can define a more restrictive percolation event corresponding to the \textit{\underline{uncountable (root) clusters}}: If $E\subset\partial\T$, we let
$$
[o \xleftrightarrow[]{\hspace{0.2cm} u \hspace{0.2cm}} E]:=\big\{\omega\in \{0,1\}^\E: \partial \mathcal{C}_o(\omega)\cap E\subseteq\partial \T \text{ is uncountable}\big\},
$$
where $\partial \mathcal{C}_o(\omega)$ is the boundary of the cluster $\mathcal{C}_o(\omega)$.
Note that the inclusion 
$
[o \xleftrightarrow[]{\hspace{0.2cm} u \hspace{0.2cm}} \partial\T]
\subset 
\I_o
$ 
holds trivially. It is also true that 
$
\p_\bp[o \xleftrightarrow[]{\hspace{0.2cm} u \hspace{0.2cm}} \partial\T]
=
\p_\bp(\I_o)
$
provided each individual infinite ray survives the percolation with probability zero \cite{LP-Book}*{Proposition 5.27}.
In particular, this is the case when the retention probabilities $\bp=\{p_e\}_{e\in \E}$ are \textit{homogeneous}, i.e., $p_e=p$ for all $e$ and where $0<p<1$ is fixed.
This homogeneous case represents the canonical model in the field, largely because it naturally gives rise to a critical threshold $p_c$ defined as
\begin{equation*}
    p_c(\T):=\sup\{p\in[0,1]: \p_p(\mathcal{I})=0\}=\inf\{p\in[0,1]: \p_p(\mathcal{I})=1\}\,.
\end{equation*}
The existence of infinite clusters at criticality $p_c(\T)$ depends crucially on the geometry of the tree; thus, infinite clusters at criticality may or may not exist.

This work aims to study the stability of infinite clusters in percolation models under
\textit{\textbf{level-dependent perturbations}} $\bq=\{q_n\}_{n\in\N}\subset (0,\infty)$, meaning that the perturbation applied to an edge depends only on its distance from the root.
Specifically, we ask whether the almost sure existence or absence of infinite clusters is preserved when the retention probabilities are perturbed to
\[
\bq \bp=\{\min\{q_{|e|}p_e,1\}\}_{e\in\E}\subset(0,1],
\]
where $|e|$ denotes the graph distance from the root to the endpoint of $e$ that is farther from the root. 
We note that by Hammersley’s monotone coupling \cites{Grimmett-Book-Perc, LP-Book}, if $\p_{\bq\bp}(\mathcal I)=1$, then $\p_{\bq'\bp}(\mathcal I)=1$ for any sequence $\bq'$ satisfying $q_n' \ge q_n, \; n\in\N$. This monotonicity allows us to simplify our analysis by restricting the range of the perturbation sequence based on the initial state of the system:
\begin{itemize}
\item If $\p_{\bp}(\mathcal I)=1$, we are interested in perturbations $\{q_n\}_n$ that decrease the edge retention probabilities while preserving the existence of infinite clusters. Thus, we consider perturbations $0<q_n\le 1$.
\item If $\p_{\bp}(\mathcal I)=0$, we are interested in perturbations that increase the size of percolation clusters while keeping them finite, and thus we look at perturbing sequences $\{q_n\}$ with $q_n\ge 1$.
\end{itemize} 
Heuristically, the cumulative effect of the perturbation along each infinite ray is governed by the product $\prod_{n=1}^\infty q_n$.
If this product converges to a positive and finite limit, then the perturbation merely rescales connectivity probabilities by a bounded factor (see \eqref{relation between P_p(o and E_n) and P_{bar p}(o and E_n)}) and is, therefore, not expected to change the qualitative behaviour of the model.
The genuinely interesting perturbations are thus the \textit{divergent} ones:
\[
\prod_{n=1}^\infty q_n=0
\qquad\text{or}\qquad
\prod_{n=1}^\infty q_n=\infty,
\]
corresponding respectively to cumulative thinning and cumulative enhancement of retention probabilities.
As the following simple example demonstrates, stability under such divergent perturbations is not guaranteed in general.
\begin{example}\label{example}
Let $\T=(\V,\E)$ be a rooted binary tree. Fix a ray $\xi=(x_0=o, x_1, \dots)$ and set the edge retention probabilities to $p_e=1$ if $e=(x_{i-1}, x_{i})$ for some $i\in\N$, and $p_e=1/3$ otherwise.
While the unperturbed model $(\T,\bp)$ clearly percolates along $\xi$, it fails to percolate under any divergent thinning perturbation $\bq=\{q_n\}_{n\in\N}$, $\prod_{n=1}^\infty q_n=0$.
\end{example}
Despite such counterexamples, our main result (Theorem A) reveals that the percolative regime is rather robust. It demonstrates that under mild, necessary conditions (excluding examples such as the Example \ref{example}), the connectivity state of the system persists even when the cumulative effect of the perturbation diverges. Given a rooted tree, we denote by $V_n$ the set of vertices at distance $n$ from the root.

\begin{manualtheorem}{A}\label{main theorem}
Let $\T$ be an infinite locally finite rooted tree and consider a percolation on $\T$ with retention probabilities $\bp=\{p_e\}_{e\in\E}\subset (0,1]$. 
\begin{itemize}
    \item[(i)] Assume that there are no infinite clusters $\p_\bp$ almost surely. 
    If the perturbation $\bq=\{q_n\}_{n\in\N}, \: {q_n\geq 1}$, satisfies 
    \begin{equation*}
        \liminf_{n\to\infty} \prod_{i=1}^n q_i\cdot  \p_\bp[o\longleftrightarrow V_n]=0,
    \end{equation*}
    then the percolation on $\T$ with perturbed retention probabilities $\bq\bp=\{\min\{q_{|e|}p_e,1\}\}_{e\in\E}$ does not have an infinite cluster almost surely.
    In particular, there always exists a perturbation $\bq=\{q_n\}_{n\in\N}, \; q_n>1,$ such that
    $\prod_{n=1}^\infty q_n=\infty$ and so that the perturbed model has no infinite clusters almost surely. 
    
    \item[(ii)] If 
        the percolation model $(\T, \bp)$ has an uncountable boundary cluster
    with positive probability, i.e., $\p_{\bp}[o \xleftrightarrow[]{\hspace{0.2cm} u \hspace{0.2cm}} \partial\T]>0$, then there exists a perturbation $\bq=\{q_n\}_{n\in\N}\subset (0,1)$ under which the almost-sure existence of an uncountable cluster is preserved and  $\lim_{n\to\infty} q_n=1$.
    \\
    Furthermore, if the retention probabilities $\bp=\{p_e\}_{e\in\E}$ are \textit{homogeneous}, i.e., $p_e=p$ for all $e\in\E$, and if $\p_{\bq \bp}(\mathcal I)=1$,
    then
    \begin{equation}\label{eq: a necessary condition for the persistence in infinity clusters case}
        \sum_{n=1}^\infty \frac{1-q_n p}{p^n (\#E_n) \prod_{i=1}^n q_i}<\infty.
    \end{equation}
\end{itemize}

\end{manualtheorem}

\begin{rmk}
\begin{itemize}
    \item[(1)] In light of Example \ref{example}, the uncountable cluster condition imposed in the second part of Theorem \ref{main theorem} is also necessary.  
    
    \item[(2)] Restricted to the case of homogeneous percolation, Theorem \ref{main theorem} is only relevant if $p=p_c$. Indeed, if $p\neq p_c$ 
    the connectivity state trivially persists under any perturbation $\bq=\{q_n\}_{n\in\N}$ that does not force the system to cross the threshold i.e., if $q_n$ stays strictly between $1$ and $p_c/p$.
    At criticality, Theorem \ref{main theorem} is nontrivial even for homogeneous percolation: it guarantees that the connectivity properties of the tree at criticality persist even under some divergent perturbations. 
  
    \item[(3)] The necessary condition established in \eqref{eq: a necessary condition for the persistence in infinity clusters case} is also sufficient, provided the underlying tree is \textit{spherically symmetric} (i.e., the degree of every $v \in \V$ depends solely on its distance from the root $o$) and the retention probabilities are homogeneous or more generally level-dependent \cite{Kersting2020}. 
    In general, however, this condition is not sufficient; a counterexample can be readily constructed using the 1-3 tree from Example 1.2 in \cite{LP-Book}.
\end{itemize}    
\end{rmk}

In the final part of the paper, we present a geometric application of our main theorem. Using the dyadic expansion of real numbers, we encode each Cantor set $C\subset[0,1]$ as a subtree $\T_C$ of the dyadic tree. An application of Theorem \ref{main theorem} (ii) yields a sequence $0<q_n\le 1$ with $\prod_{n\in\N} q_n=0$ such that $\p_{\bq}[o\longleftrightarrow\partial\T_C]>0$. The proof of Theorem \ref{main theorem} also reveals that, with the same sequence $\bq$, it holds that $\p_{\bq}[o\longleftrightarrow\partial\T_{C'}]>0$ for all affine copies $C'$ of $C$. This observation provides a potential avenue to study the Erd\H{o}s similarity problem for Cantor sets via tree percolations.

Our proof of Theorem \ref{main theorem} relies heavily on Lyons' celebrated capacity characterisation of percolation \cites{Lyons1990, LP-Book}. 
Although some parts of the argument could be replaced by more elementary hands-on methods, the potential-theoretic characterisation of
$\p_{\bp}[o\longleftrightarrow\partial\T]$ leads to 
concise proofs and great generality. Moreover, it will be crucial in the geometric applications presented in the final section.

The rest of the paper is organised as follows: in Section \ref{Lyons' capacity characterisation}, we recall Lyons' capacity characterisation of percolation, and Section \ref{proof section} is dedicated to the proof of Theorem \ref{main theorem}.
In the last section, Section \ref{Section for Applications}, we discuss a connection between our main result and the Erd\H{o}s similarity problem for Cantor sets.

\section{Lyons' capacity characterisation}\label{Lyons' capacity characterisation}

In this section, we introduce some necessary notations and concepts along with Lyons' capacity characterisation of percolation on arbitrary locally finite trees (including finite trees), upon which our subsequent proofs rely.

Let $\T=(\V,\E)$ be a locally finite tree. We define a partial order on the vertex set $\V$ by writing $x \rightarrow y$ if $y$ is a \textit{descendant} of $x$; that is, if $x$ lies on the unique path from the root $o$ to $y$. 
For two vertices $v_1, v_2 \in \V$, their most recent common ancestor $v_1 \wedge v_2$ is defined as the farthest vertex $v \in \V$ from the root satisfying both $v \rightarrow v_1$ and $v \rightarrow v_2$.
For any vertex $v\in\V$, let $|v|$ denote its graph distance from the root, and let $[o,v]$ denote the set of edges comprising the unique path connecting $o$ and $v$. 
Similarly, for an edge $e\in\E$, we set $|e| := |v(e)|$, where $v(e)$ denotes the endpoint of the edge $e$ farther from the root.

We extend the $\wedge$-notation to the boundary of the tree: For any two rays $\xi=(x_0=o, x_1,\dots)$ and $\eta=(y_0=o, y_1, \dots)$ in the boundary $\partial \T$, we define
\begin{equation*}
    \xi\wedge \eta :=
    \begin{cases}
        x_k, & \text{if } \xi\neq \eta, \text{ where } k = \sup\{n \geq 0 : x_i=y_i \text{ for all } 0 \leq i \leq n\};\\
        \xi, & \text{if } \xi=\eta.
    \end{cases}
\end{equation*}
We can then endow $\partial\T$ with a metric $\mathtt{d}$ by setting $\mathtt{d}(\xi,\eta)=0$ if $\xi=\eta$, and $\mathtt{d}(\xi,\eta)=e^{-|\xi\wedge\eta|}$ otherwise. Under this metric, it is clear that $\partial\T$ forms a compact metric space. We denote the space of Borel probability measures on $\partial\T$ by $\M_1(\partial\T)$.

For a given set of edge retention probabilities $\bp=\{p_e\}_{e\in\E}\subset (0,1]$, we consider the gauge function $\Psi_\bp:\V\to\R_+$ defined by
\begin{equation*}
    \Psi_{\bp}(x) := \frac{1}{\p_{\bp}[o\longleftrightarrow x]}.
\end{equation*}
We extend this function to the boundary $\partial \T$ by taking the limit $\Psi_{\bp}(\xi) = \lim_{n\to\infty} \Psi_{\bp}(x_n)$ for each infinte ray $\xi=(x_0=o, x_1, \dots) \in \partial\T$.

A cornerstone of our proofs is a well-known result in discrete potential theory due to R. Lyons \cite{LP-Book}*{Theorem 16.3}, \cite{Lyons1992}*{Theorem 2.3}. It establishes that for any Borel set $E\subseteq\partial \T$, its connectivity probability is governed by its capacity:
\begin{equation}\label{connectivity cap relation}
    \cpy_{\bp}(E) \leq \p_{\bp}[o \longleftrightarrow E] \leq 2 \cpy_{\bp}(E).
\end{equation}
Here, $\cpy_{\bp}(E)$ denotes the capacity of $E$ with respect to the gauge function $\Psi_{\bp}$, defined as
\begin{equation*}
    \cpy_{\bp}(E) := \Big(\inf \big\{\e_\bp(\mu): \mu\in \M_1(\partial \T), \; \mu(\partial \T\setminus E)=0\big\} \Big)^{-1},
\end{equation*}
where $\e_\bp$ is the corresponding energy functional:
\begin{equation}\label{eq:energy_def}
    \e_\bp(\mu) = \iint\limits_{(\partial\T)^2}\Psi_{\bp}(\xi\wedge \eta)\mu(d\xi)\mu(d\eta).
\end{equation}

When the tree $\T$ is infinite, the energy $\e_\bp(\mu)$ may assume the value $+\infty$. We thus define the \emph{effective domain} of $\e_\bp$ as $\dom \e_{\bp}:=\{\mu\in\M_1(\partial\T): \e_{ \bp}(\mu)<\infty\}$. We conclude this section by noting that if $\T$ is infinite and possesses no finite rays, \eqref{connectivity cap relation} implies that percolation occurs under $\p_\bp$ if and only if $\dom\e_\bp\neq \emptyset$.

\section{Proof of Theorem \ref{main theorem}}\label{proof section}

In the proof of Theorem \ref{main theorem}, we assume that the underlying tree $\T$ has no finite rays. 
This does not restrict the generality because,  as $\T$ is locally finite, any infinite connected component must necessarily contain an infinite ray.

\subsection{Proof of part (i): stability in the absence of infinite clusters}\label{Stability in the absence of infinite clusters}

For $n\in\N$, let $\T_n=(\V_n,\E_n)$ be the truncation of $\T$ to the first $n$ levels, i.e., $\T_n$ is the finite subtree of $\T$ containing precisely the vertices at distance at most $n$ from the root.
Then the boundary $\partial\T_n$ may be identified with $V_n$.
Now consider the perturbed retention probabilities $\bq\bp=\{\min\{q_{|e|}p_e,1\}\}_{e\in\E}$ and the associated energy functional $\e^{\T_n}_{\bq\bp}$ in the truncation $\T_n$. Then
\begin{equation}\label{rel between perturbed and original energies}
    \e^{\T_n}_{\bq\bp}(\mu)
    =
    \sum_{v_1,v_2\in V_n} \frac{\mu(v_1)\mu(v_2)}{\p_{\bq \bp}[o\longleftrightarrow v_1\wedge v_2]}
    \geq
    \sum_{v_1,v_2\in V_n} \frac{\mu(v_1)\mu(v_2)}{\p_{\bp}[o\longleftrightarrow v_1\wedge v_2] \cdot Q_{|v_1\wedge v_2|}}\ge\frac{1}{Q_n}\e^{\T_n}_{\bp}(\mu)\,,
\end{equation}
where $\mu\in\M_1(V_n)$, $Q_0=1$, and for $k\in\N$, $Q_k:=\prod_{i=1}^k q_i$.
Note that we used the assumption $q_i\ge 1$, which implies that $Q_n$ is non-decreasing, and furthermore, that $\p_{\bq\bp}[o\longleftrightarrow u)\ge\p_\bp[o\longleftrightarrow u]$ for all $u\in\T$. 
Then, by employing \eqref{connectivity cap relation}, one obtains that
\begin{equation}\label{relation between P_p(o and E_n) and P_{bar p}(o and E_n)}
    \p_{\bq\bp}[o\longleftrightarrow V_n]
    \leq
    2Q_n \p_{\bp}[o\longleftrightarrow V_n].
\end{equation}
The claim (i) of Theorem \ref{main theorem} now follows from (\ref{relation between P_p(o and E_n) and P_{bar p}(o and E_n)}) noting that 
\[\p_{\bq \bp}[o\longleftrightarrow \partial\T]=\inf\limits_{n\in\N}\p_{\bq \bp}[o\longleftrightarrow V_n]=\lim\limits_{n\to\infty}\p_{\bq \bp}[o\longleftrightarrow V_n]\,.\]

\subsection{Proof of part (ii): stability of infinite clusters under perturbations}\label{Stability of infinite clusters under perturbations}
We use the following elementary lemmas in the proof of \ref{main theorem} (ii).
\begin{lem}\label{elementary lemma}
    Assume $\{a_n\}_{n\geq 0}$ is a non-negative summable sequence. 
        Then there exists a positive strictly increasing unbounded sequence $(\mathfrak{Q}_n)_{n\geq 0}$ such that $\lim_{n}\frac{\mathfrak{Q}_{n+1}}{\mathfrak{Q}_n}=1$ and $\sum_{n\geq 0} a_n \mathfrak{Q}_n<\infty$.
\end{lem}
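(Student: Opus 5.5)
The plan is to build $\mathfrak{Q}_n$ from the tails of the series, using the classical fact that for a convergent series, dividing by the square root of the tail still gives a convergent series. Put $r_n:=\sum_{k\ge n}a_k$. Then $r_n$ is non-increasing and $r_n\to0$. First dispose of the degenerate case: if $a_n=0$ for all large $n$, then $\mathfrak{Q}_n:=n+1$ works, since $(n+2)/(n+1)\to1$ and the sum $\sum a_n\mathfrak{Q}_n$ is finite. So assume from now on that $r_n>0$ for every $n$.

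The natural candidate is $1/\sqrt{r_n}$. It diverges, and
\[
\sum_n \frac{a_n}{\sqrt{r_n}}=\sum_n\frac{r_n-r_{n+1}}{\sqrt{r_n}}\le \sum_n 2\bigl(\sqrt{r_n}-\sqrt{r_{n+1}}\bigr)\le 2\sqrt{r_0}<\infty,
\]
because $r_n-r_{n+1}=(\sqrt{r_n}-\sqrt{r_{n+1}})(\sqrt{r_n}+\sqrt{r_{n+1}})$ and $\sqrt{r_n}+\sqrt{r_{n+1}}\le 2\sqrt{r_n}$. The ratio $\sqrt{r_n/r_{n+1}}$, however, need not tend to $1$ (for instance when $a_n$ has large isolated spikes), and the sequence need not be strictly increasing.

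The main obstacle is therefore the ratio condition, and I would handle it by capping the growth. Define recursively
\[
\mathfrak{Q}_0:=1,\qquad \mathfrak{Q}_{n+1}:=\min\Bigl\{\bigl(1+\tfrac{1}{n+1}\bigr)\mathfrak{Q}_n,\ \max\bigl\{\mathfrak{Q}_n\bigl(1+\tfrac{1}{(n+1)^2}\bigr),\ r_{n+1}^{-1/2}\bigr\}\Bigr\}.
\]
The three required properties of $\mathfrak{Q}$ then check out as follows.
\begin{itemize}
\item \textbf{Strictly increasing.} Both arguments of the outer minimum exceed $\mathfrak{Q}_n$, so $\mathfrak{Q}_{n+1}>\mathfrak{Q}_n$.
\item \textbf{Ratio tends to $1$.} The cap gives $\mathfrak{Q}_{n+1}/\mathfrak{Q}_n\le 1+\tfrac1{n+1}\to1$.
\item \textbf{Summability.} An induction gives $\mathfrak{Q}_n\le C\max\{1,r_n^{-1/2}\}$ with $C=\prod_k(1+k^{-2})$. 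At each step $\mathfrak{Q}_{n+1}$ is either below $r_{n+1}^{-1/2}$ or equal to $\mathfrak{Q}_n(1+(n+1)^{-2})$, and $r$ is non-increasing, so $r_n^{-1/2}\le r_{n+1}^{-1/2}$. Hence $\sum a_n\mathfrak{Q}_n\le C\sum a_n+C\sum a_n r_n^{-1/2}<\infty$.
\end{itemize}

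Unboundedness needs a separate argument. Suppose $\mathfrak{Q}_n\uparrow L<\infty$. Then $\mathfrak{Q}_{n+1}/\mathfrak{Q}_n\to1$ quickly enough that $\prod_n\mathfrak{Q}_{n+1}/\mathfrak{Q}_n$ converges. Now $r_{n+1}^{-1/2}\to\infty$, so eventually $r_{n+1}^{-1/2}>(1+\tfrac1{n+1})\mathfrak{Q}_n$. For all such $n$ the cap is active and $\mathfrak{Q}_{n+1}=(1+\tfrac1{n+1})\mathfrak{Q}_n$. The harmonic product diverges, which is a contradiction. So $\mathfrak{Q}_n\to\infty$.

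The only delicate point is to make sure the induction bound and the forced-cap argument are compatible. If the recursive minimum becomes awkward, a fallback is to take $\mathfrak{Q}_n:=\min_{k\ge n}\bigl(\min\{r_k^{-1/2},\,k\}\bigr)$-type regularisations. Alternatively, choose a slowly increasing integer blocking $N_1<N_2<\dots$ with $r_{N_j}\le 4^{-j}$ and gaps $N_{j+1}-N_j\ge j^2$, and let $\log\mathfrak{Q}_n$ interpolate linearly from $j$ at $N_j$ to $j+1$ at $N_{j+1}$. That gives ratios $\le e^{1/j^2}\to1$, strict increase, unboundedness, and $\sum a_n\mathfrak{Q}_n\le\sum_j e^{j+1}r_{N_j}\le \sum_j e^{j+1}4^{-j}<\infty$. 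I would in fact present this blocking construction as the cleaner proof.
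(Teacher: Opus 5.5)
Your proposal is correct, and both of your constructions work. The blocking construction you say you would present is essentially the paper's argument. The paper picks $N_1<N_2<\cdots$ with $\sum_{n=N_k+1}^{N_{k+1}}a_n<2^{-k}$ and sets $\mathfrak{Q}_n=k$ on the $k$-th block, so that $\sum_n a_n\mathfrak{Q}_n\le\sum_{n\le N_1}a_n+\sum_k k2^{-k}$. There are two differences:
\begin{itemize}
\item Because the paper's growth is linear in the block index, the ratio $(k+1)/k\to1$ comes for free and no condition on the gaps $N_{k+1}-N_k$ is needed. Your exponential growth $e^{j}$ forces the gap condition $N_{j+1}-N_j\ge j^2$; gaps merely tending to infinity would also do.
\item The paper's $\mathfrak{Q}_n$ is a step function, constant on each block. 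It is therefore only non-decreasing and does not literally meet the ``strictly increasing'' requirement, which is what gives $\tilde q_n<1$ when the lemma is used in Theorem A(ii). Your log-linear interpolation is exactly the small repair needed.
\end{itemize}
The one loose end in your blocking version is that you never define $\mathfrak{Q}_n$ for $n<N_1$. Letting $\log\mathfrak{Q}_n$ interpolate linearly from $0$ at $n=0$ to $1$ at $n=N_1\ge1$ fixes this, and it only affects finitely many terms.

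Your first construction, $\mathfrak{Q}_n\approx r_n^{-1/2}$ with the recursive cap, is a genuinely different route from the paper. It ties $\mathfrak{Q}$ directly to the tails through the telescoping bound $\sum_n a_n r_n^{-1/2}\le 2\sqrt{r_0}$. It builds in strict increase and the explicit ratio bound $1+\frac{1}{n+1}$, and it gives the quantitative estimate $\sum_n a_n\mathfrak{Q}_n\le C\bigl(r_0+2\sqrt{r_0}\bigr)$ with $C=\prod_k(1+k^{-2})$. The price is a separate degenerate case and a separate unboundedness argument. You handle both correctly. The worry you raise about compatibility is unfounded: the induction bound uses only $\mathfrak{Q}_{n+1}\le\max\{\mathfrak{Q}_n(1+(n+1)^{-2}),\,r_{n+1}^{-1/2}\}$, which always holds. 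The forced-cap step concerns only which branch of the outer minimum is active, so the two do not interact.
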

\begin{proof}
    It is clear that there exists a strictly increasing sequence $(N_k)_{k\in \N}$ of natural numbers such that for every $k\in\N$,
    \begin{equation*}
        \sum_{n=N_k+1}^{N_{k+1}} a_n<\frac{1}{2^k}.
    \end{equation*}
    We set, for $0\leq n\leq N_1$, $\mathfrak{Q}_n:=1$, and for every $k$ and $N_k<n\leq N_{k+1}$, set $\mathfrak{Q}_n:=k$.
    Then 
    \begin{eqnarray}
        \sum_{n=0}^\infty a_n \mathfrak{Q}_n
        &=&\notag 
        \sum_{n=0}^{N_1} a_n \mathfrak{Q}_n
        +
        \sum_{k=1}^\infty \sum_{n=N_k+1}^{N_{k+1}} a_n\mathfrak{Q}_n\\
        &\leq& \notag 
        \sum_{n=0}^{N_1} a_n
        +
        \sum_{k=1}^\infty
        \frac{k}{2^k}<\infty\,.
    \end{eqnarray}
\end{proof}
\begin{lem}\label{lemma for proving necessary cond}   
Assume $\{a_n\}_{n\geq0}$ is a non-negative sequence and $\{b_n\}_{n\geq0}$ is a non-increasing sequence with $\lim_{n\to\infty}b_n=0$. 
If $\{|\sum_{i=0}^n a_ib_i-(\sum_{i=0}^n a_i)b_{n+1}|\}_{n\in\N}$ is bounded, then $(\sum_{i=0}^n a_i)b_{n+1}$ converges to $0$ as $n\longrightarrow\infty$.
\end{lem}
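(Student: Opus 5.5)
Set $S_n:=\sum_{i=0}^n a_i$ and $D_n:=\sum_{i=0}^n a_ib_i-S_nb_{n+1}$. The plan is to recognise $D_n$ as a partial sum of a series with non-negative terms and then deduce the conclusion from a tail estimate. Summation by parts gives
\[
D_n=\sum_{i=0}^n a_i(b_i-b_{n+1})=\sum_{i=0}^n a_i\sum_{j=i}^{n}(b_j-b_{j+1})=\sum_{j=0}^n S_j\,(b_j-b_{j+1}).
\]
Since $a_i\ge 0$ and $b$ is non-increasing, every term is non-negative. Hence $D_n$ is non-decreasing in $n$, and the boundedness hypothesis (the absolute value is harmless, as $D_n\ge 0$) gives $\sum_{j\ge 0} S_j(b_j-b_{j+1})<\infty$.

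Next I would bound $S_nb_{n+1}$ by a tail of this convergent series, using $b_m\to 0$. For fixed $n$, telescoping gives $b_{n+1}=\sum_{j\ge n+1}(b_j-b_{j+1})$. Because $S_j$ is non-decreasing in $j$,
\[
0\le S_n b_{n+1}=\sum_{j\ge n+1} S_n(b_j-b_{j+1})\le \sum_{j\ge n+1} S_j(b_j-b_{j+1}).
\]
The right-hand side is the tail of a convergent series of non-negative terms, so it tends to $0$ as $n\to\infty$. This gives $S_nb_{n+1}\to 0$.

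There is no real obstacle here. The only points needing care are the rearrangement in the summation by parts, which is a finite double sum with no convergence issue, and the use of $\lim b_n=0$ to write $b_{n+1}$ as an infinite telescoping sum. The monotonicity of both $S_j$ and $b_j$ is exactly what makes the comparison with the tail work.
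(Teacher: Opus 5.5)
Your proof is correct, and it takes a somewhat different route from the paper's.

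The paper never rearranges $D_n$. It fixes $m$ and lets $n\to\infty$ in $\sum_{i\le m}a_i(b_i-b_{n+1})\le\mathfrak{M}$ to get $\sum_i a_ib_i\le\mathfrak{M}$. It then splits $S_nb_{n+1}=S_mb_{n+1}+\sum_{i=m+1}^n a_ib_{n+1}$ and bounds the second piece by the tail $\sum_{i>m}a_ib_i$, using $b_{n+1}\le b_i$. It finishes with a $\limsup_n$ followed by $m\to\infty$.

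You instead use summation by parts to write $D_n$ as the partial sums of the non-negative series $\sum_j S_j(b_j-b_{j+1})$. This makes monotonicity and convergence of $D_n$ immediate. A single tail of that series then dominates $S_nb_{n+1}$ directly, so there is no two-parameter limit.

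The two arguments control the same quantity. By Tonelli, $\sum_j S_j(b_j-b_{j+1})=\sum_i a_ib_i$, and your tail $\sum_{j>n}S_j(b_j-b_{j+1})$ equals $S_nb_{n+1}+\sum_{i>n}a_ib_i$. Your estimate therefore amounts to the quantitative bound $S_nb_{n+1}\le \lim_N D_N-D_n$.

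The paper's version works directly with $\sum_i a_ib_i$, the quantity that matters in the application, and needs neither the rearrangement nor the infinite telescoping of $b_{n+1}$. Yours is shorter and gives an explicit rate in terms of how fast $D_n$ converges.
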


\begin{proof}[Proof]
By the assumptions, there exists an $\mathfrak{M}>0$ such that for all $n\in\N$,
$$
\Big\lvert \sum_{i=0}^n a_ib_i-(\sum_{i=0}^n a_i)b_{n+1}\Big\rvert
=
\Big\lvert \sum_{i=0}^na_i(b_i-b_{n+1}) \Big\rvert
\leq \mathfrak{M}\,.
$$
Thus, since $\lim_{n\to\infty} b_n=0$, for any $m\in\N$,
\begin{equation*}
\mathfrak{M}
\geq
\lim_{n\to\infty}\sum_{i=0}^m a_i(b_i-b_{n+1})
=
\sum_{i=0}^m a_ib_i,   
\end{equation*}
which yields $\sum_{i=0}^\infty a_ib_i\leq \mathfrak{M}$.
Then the monotonicity of $\{b_n\}$ implies that for any $0\leq m<n$,
\begin{equation*}
    0\leq \sum_{i=m+1}^n a_ib_{n+1}\leq \sum_{i=m+1}^\infty a_ib_i\,.
\end{equation*}
Hence, since $\lim_{n\to\infty} b_n=0$, one has
\begin{equation*}
    \limsup_{n\to\infty}\sum_{i=0}^n a_i b_{n+1}
    \leq 
    \sum_{i=m+1}^\infty a_ib_i,
\end{equation*}
which, in turn, yields that 
\begin{equation*}
    \limsup_{n\to\infty}\sum_{i=0}^n a_i b_{n+1}
    \leq 
    \lim_{m\to\infty}\sum_{i=m+1}^\infty a_ib_i
    =
    0\,.
\end{equation*}
\end{proof}

\begin{proof}[Proof of Theorem \ref{main theorem}: Part (ii)]

Consider the following $\Psi_{\bp}-$level decomposition of the boundary $\partial\T$: 
\[\partial\T=\fr_\infty\cup\bigcup_{n=1}^\infty\fr_n\,,
\]
where $\fr_\infty=\{\xi\in\partial\T\,:\,\Psi_\bp(\xi)=\infty\}$, and $\fr_n=\{\xi\in\partial\T\,:\Psi_\bp(\xi)\le n\}$. 
One can easily verify that $\Psi_\bp:\partial\T\to [1,\infty]$ is lower semicontinuous. Therefore, for every $n\in\N$, $\fr_n\subset\partial\T$ is a compact subset. 
Since the decomposition is countable, we have
    \begin{equation}\label{uncountable cluster event decom}
    [o \xleftrightarrow[]{\hspace{0.2cm} u \hspace{0.2cm}} \partial\T]
    =
    [o \xleftrightarrow[]{\hspace{0.2cm} u \hspace{0.2cm}} \fr_\infty]
    \bigcup \bigcup_{n=1}^\infty
    [o \xleftrightarrow[]{\hspace{0.2cm} u \hspace{0.2cm}} \fr_n].
    \end{equation}
    Then, since $\p_\bp[o \xleftrightarrow[]{\hspace{0.2cm} u \hspace{0.2cm}} \partial\T]>0$, either $\p_\bp[o \xleftrightarrow[]{\hspace{0.2cm} u \hspace{0.2cm}} \fr_\infty]>0$ or there exists $n\in\N$ such that $\p_\bp[o \xleftrightarrow[]{\hspace{0.2cm} u \hspace{0.2cm}} \fr_n]>0$.
    Below, we discuss each case separately.

    \textit{\underline{"\textbf{Case I:} $\mathbf{\p_\bp[o \xleftrightarrow[]{\hspace{0.2cm} u \hspace{0.2cm}} \fr_\infty]>0}$"}:}   
    Firstly, to prove the almost sure existence of infinite clusters under $\p_{\bq\bp}$ (i.e., $\p_{\bq \bp}[o \longleftrightarrow \partial \T]>0$) for a given divergent perturbation $\bq=\{q_n\}_{n\in\N}$, it suffices to show 
    that $\p_{\bq \bp}[o \longleftrightarrow \fr_\infty]>0$, which, in light of \eqref{connectivity cap relation}, amounts to showing
    $\cpy_{\bq \bp}(\fr_\infty)>0$.
    By definition, the latter is equivalent to 
    \begin{equation}\label{condition to be met}
        \inf \Big\{\e_{\bq \bp}(\mu): \mu\in \M_1(\partial \T), \mu(\partial\T\setminus \fr_\infty)=0\Big\} <\infty.
    \end{equation}
    Secondly, assuming $\prod_{n\in\N} q_n=0$, Proposition 5.27 in \cite{LP-Book} implies that  $\p_{\bq\bp}[o \xleftrightarrow[]{\hspace{0.2cm} u \hspace{0.2cm}} \partial\T]=\p_{\bq\bp}[o \longleftrightarrow \partial\T]$. 
    Therefore, if we can show $\p_{\bq \bp}[o \longleftrightarrow \fr_\infty]>0$, which is equivalent to (\ref{condition to be met}), then this 
    automatically implies that $\p_{\bq\bp}[o \xleftrightarrow[]{\hspace{0.2cm} u \hspace{0.2cm}} \partial\T]>0$. 
    As a result, it suffices to verify \eqref{condition to be met} for some divergent perturbation $\bq=\{q_n\}_{n\in\N}$. We establish this in what follows.
    
    By our assumption, $\p_\bp[o \longleftrightarrow \fr_\infty]\ge \p_\bp[o \xleftrightarrow[]{\hspace{0.2cm} u \hspace{0.2cm}} \fr_\infty]>0$ and thus by \eqref{connectivity cap relation}, $\cpy_{\bp}(\fr_\infty)>0$. 
    Whence, there exists $\tilde \mu\in \M_1(\partial \T)$ such that $\tilde \mu(\partial\T\setminus \fr_\infty)=0$ and  $\mathcal{E}_{\bp}(\tilde\mu)<\infty$. Now
    \begin{equation}\label{the int at criticality-0}
        \mathcal{E}_{\bp}(\tilde\mu)
        =
        \iint\limits_{(\partial\T)^2}\Psi_{\bp}(\xi\wedge \eta)\tilde\mu(d\xi)\tilde\mu(d\eta)
        = 
        \iint\limits_{\Delta} \Psi_\bp(\xi\wedge\eta) \tilde\mu(d\xi)\tilde\mu(d\eta) 
        +
        \sum_{v\in\V} \frac{\tilde\mu^2(\Theta_v)}{\p_{\bp}[o\longleftrightarrow v]},
    \end{equation}
    where for $v\in\V$,
    $$
    \Theta_v:=\{(\xi,\eta)\in (\partial \T)^2: \xi\wedge\eta=v\}
    \; \text{ and } \;
    \Delta:=\{(\xi,\xi): \xi\in\partial\T\}=\mathtt{diag}(\partial\T)^2.
    $$
    Note that the second identity in \eqref{the int at criticality-0} holds because $\{\Delta, \Theta_v, \; v\in \V\}$ forms a measurable partition of $(\partial \T)^2$.
    As $\tilde\mu$ is supported on $\fr_\infty$,
    \begin{equation*}
        \iint\limits_{\Delta}\Psi_{\bp}(\xi\wedge \eta)\tilde\mu(d\xi)\tilde\mu(d\eta)
        =
        \iint\limits_{\Delta(\fr_\infty)}\Psi_{\bp}(\xi\wedge \eta)\tilde\mu(d\xi)\tilde\mu(d\eta),
    \end{equation*}
    where $\Delta(\fr_\infty):=\mathtt{diag}\; \fr_\infty^2$.
    Thus, since $\Psi_\bp(\xi)=\infty$ for all $\xi\in\fr_\infty$, one obtains from \eqref{the int at criticality-0} that 
    \begin{equation}\label{the int at criticality}
         \infty>\mathcal{E}_{\bp}(\tilde\mu)
        =
        \infty\cdot \tilde\mu^2(\Delta)
        +
        \sum_{v\in\V} \frac{\tilde\mu^2(\Theta_v)}{\p_{\bp}[o\longleftrightarrow v]}\,.
    \end{equation}
    Thus, in particular, $\tilde\mu^2(\Delta)=0$. Therefore,
    \begin{equation*}
        \mathcal{E}_{\bp}(\tilde\mu)
        =
        \sum_{v\in\V} \frac{1}{\p_{\bp}[o\longleftrightarrow v]}\tilde\mu^2(\Theta_v)
        =
        \sum_{n=0}^\infty \sum_{\substack{v\in\V\\ |v|=n}}
        \frac{1}{\p_{\bp}[o\longleftrightarrow v]}\tilde\mu^2(\Theta_v)
        <\infty.
    \end{equation*}
    Applying Lemma 
    \ref{elementary lemma} to the sequence 
    $
    a_n:=\sum_{\substack{v\in\V,\, |v|=n}}
    \frac{1}{\p_{\bp}[o\longleftrightarrow v]}\tilde\mu^2(\Theta_v)
    $
    yields a strictly increasing sequence $\{\tilde Q_n^{-1}\}_{n\geq 0}$ such that $\tilde Q_n^{-1}\xrightarrow[n\to\infty]{} \infty$, $\frac{\tilde Q_n}{\tilde Q_{n-1}}\xrightarrow[n\to\infty]{} 1$, and 
    \begin{equation*}
        \sum_{n=0}^\infty 
        \tilde Q_n^{-1}
        \sum_{\substack{v\in\V\\ |v|=n}}
        \frac{1}{\p_{\bp}[o\longleftrightarrow v]}\tilde\mu^2(\Theta_v)
        <
        \infty.
    \end{equation*}
    It is readily checked that the perturbation $\tilde\bq=\{\tilde q_n\}_{n\in\N}$ given by $\tilde q_1:=\tilde Q_1$ and $\tilde q_n:=\frac{\tilde Q_{n}}{\tilde Q_{n-1}}$, $n\geq 2$ 
    satisfies $\mathcal{E}_{\tilde \bq \bp}(\tilde\mu)<\infty$, and whence also \eqref{condition to be met} holds.
    Indeed, for the perturbed retention probabilities $\tilde\bq \bp=\{\tilde q_{|e|} p_e\}_{e\in\E}$ and the corresponding gauge function $\Psi_{\tilde\bq \bp}$ one has 
    \begin{equation*}
        \Psi_{\tilde\bq \bp}(x)
        =
        \frac{1}{\p_{\tilde\bq \bp}[o\longleftrightarrow x]}
        =
        \Psi_{\bp}(x)\prod_{i=1}^{|x|}\tilde q_i^{-1}
        =
        \Psi_{\bp}(x) \tilde Q_{|x|}^{-1}, \;\;\; x\in \V.
    \end{equation*}
    Thus, as $\tilde\mu^2(\Delta)=0$,
    \begin{equation*}
        \mathcal{E}_{\tilde\bq \bp}(\tilde\mu)
        =
        \iint\limits_{(\partial\T)^2}\Psi_{\tilde\bq \bp}(\xi\wedge \eta)\tilde\mu(d\xi)\tilde\mu(d\eta)
        =
        \sum_{n=0}^\infty 
        \tilde Q_n^{-1}
        \sum_{\substack{v\in\V\\ |v|=n}}
        \frac{1}{\p_{\bp}[o\longleftrightarrow v]}\tilde\mu^2(\Theta_v)
        <
        \infty\,.
    \end{equation*}
    
\textit{\underline{"\textbf{Case II:} $\mathbf{\p_\bp[o \xleftrightarrow[]{\hspace{0.2cm} u \hspace{0.2cm}} \fr_\infty]=0}$"}:} 
In this case, there exists $n\in\N$ such that $\p_\bp[o \xleftrightarrow[]{\hspace{0.2cm} u \hspace{0.2cm}} \fr_n]>0$.
This, in particular, implies that the set $\fr_n\subset \partial\T$ is uncountable.
Since $\Psi_\bp(\xi)\leq n$ for all $\xi\in\fr_n$, for every $\mu\in\M_1(\partial\T)$ with $\mu(\partial\T\setminus\fr_n)=0$, one has that 
\begin{equation*}
\e_\bp(\mu)=\iint_{(\partial\T)^2}\Psi_\bp(\xi\wedge\eta)\mu(d\xi)\mu(d\eta)\leq n\,.    
\end{equation*}
Then, as the set $\fr_n$ is compact and uncountable, there exists a \textit{non-atomic} probability measure $\tilde \mu\in\M_1(\partial\T)$ such that  $\tilde\mu(\partial\T\setminus\fr_n)=0$ (see e.g. Theorem 12.22 in \cite{AB-book}).
Hence, in particular, $\tilde\mu^2(\Delta)=0$.
Then, with a similar reasoning as in Case I, we may construct a strictly increasing sequence $\{\tilde Q_n^{-1}\}_{n\geq 0}$ and a perturbation $\tilde\bq=\{\tilde q_n\}_{n\in\N}$ such that $\prod_{n\in\N}\tilde q_n=0$ 
and so that, for the perturbed retention probabilities $\tilde\bq \bp:=\{\tilde q_{|e|} p_e\}_{e\in\E}$, we have $\e_{\tilde\bq \bp}(\tilde\mu)<\infty$.
In light of \eqref{connectivity cap relation}, this then implies $\p_{\tilde\bq \bp}[o \longleftrightarrow \fr_n]>0$.

    Finally, we prove the necessity of the condition (\ref{eq: a necessary condition for the persistence in infinity clusters case}), when the percolation model $(\T, \bp=\{p\}_{e\in\E})$ is homogeneous.
    For every integer $n\geq 0$, we define 
    \begin{equation}\label{definition of Chi_n and it is union as square sets}
        \Xi_n:=\{(\xi,\eta)\in \partial \T^2: |\xi\wedge \eta|\geq n \}=\bigsqcup\limits_{v\in \V: |v|=n}\partial \T_v^2,
    \end{equation}
    where $\partial \T_v$ is the set of infinite rays passing through the vertex $v$.
    For the perturbed retention probabilities $\bq\bp=\{p_n\}_{n\in\N}=\{q_np\}_{n\in\N}$, we then consider the corresponding energy functional $\e_{\bq \bp}$: for $\mu\in\M_1(\partial\T)$,
    $$
    \e_{\bq \bp}(\mu)
        =
        \iint\limits_{(\partial\T)^2}\Psi_{\bq\bp}(\xi\wedge \eta)\mu(d\xi)\mu(d\eta)
        =
        \infty \cdot  \mu^2(\Delta)
        +
        \sum_{n=0}^\infty (p_1 \dots p_n)^{-1} \mu^2(\Xi_n\setminus\Xi_{n+1}).
    $$
    We now aim to show that for any $\mu\in \dom \e_{\bq \bp}$,
    \begin{equation}\label{eq to show to prove nec cond}
        \e_{\bq \bp}(\mu)
        =
        1+\sum_{n=0}^\infty \left(p^{-n-1} Q^{-1}_{n+1}-p^{-n} Q^{-1}_{n}\right)\mu^2(\Xi_{n+1})\,,
    \end{equation}
    where $Q_0=1$ and $Q_n=\prod_{i=1}^n q_i$ for $n\ge 1$.
    For a fixed $N\in \N$, let
    \[\e(N)=\mu^2(\partial\T\setminus\Xi_1)+\sum_{n=1}^{N} (p_1\dots p_n)^{-1}\mu^2(\Xi_n\setminus\Xi_{n+1})\,,\]
    and note that $\e_{\bq \bp}(\mu)=\lim_{N\to\infty}\e(N)$. Writing 
\begin{equation}\label{eq: e^N_{bar p}(mu) and F_n's and Q_n's}
        \e(N)=
        1-p^{-N}Q_N^{-1}\mu^2(\Xi_{N+1})
        +
        \sum_{n=0}^{N-1}
        [p^{-n-1}Q^{-1}_{n+1}-p^{-n}Q^{-1}_{n}]\mu^2(\Xi_{n+1})
\end{equation}
and noting that $\mu^2(\Xi_n)\downarrow \mu^2(\Delta)=0$ as $n\to\infty$, we may apply Lemma \ref{lemma for proving necessary cond} with $a_n=p^{-n-1}Q^{-1}_{n+1}-p^{-n}Q^{-1}_{n}$ and $b_n=\mu^2(\Xi_{n+1})$ to conclude that $\lim_{N\to\infty}p^{-N}Q_N^{-1}\mu^2(\Xi_{N+1})=0$. Thus, \eqref{eq to show to prove nec cond} follows from \eqref{eq: e^N_{bar p}(mu) and F_n's and Q_n's}  upon taking the limit as $N\to\infty$.
\\
Since $\{\partial \T_v: |v|=n\}$ partitions the space $\partial\T$, by (\ref{definition of Chi_n and it is union as square sets}) and the Cauchy-Schwarz inequality,
\begin{equation*}
    \mu^2(\Xi_n)
    =
    \sum_{v: |v|=n}\mu(\partial \T_v)^2
    \geq 
    (\#\{v\in\V: |v|=n\})^{-1}\Big(\sum_{v: |v|=n}\mu(\partial \T_v)\Big)^2
    =
    \frac{1}{\#E_n}.
\end{equation*}
This, in combination with (\ref{eq to show to prove nec cond}), concludes the proof, i.e.,  for $\mu\in\dom \e_{\bq \bp}$,
$$
\infty
>
\e_{\bq \bp}(\mu)-1
\geq 
\sum_{n=0}^\infty [1-pq_{n+1}] p^{-n-1} Q^{-1}_{n+1} \frac{1}{\#E_{n+1}}\,.
$$

\end{proof}

\section{On a connection to the Erd\H{o}s similarity conjecture for Cantor sets}\label{Section for Applications}

The Erd\H{o}s similarity conjecture famously asserts that for each infinite set $A\subset\mathbb{R}$, there is a positive measure subset that does not contain affine copies $A$.
The results in this note are inspired by a variant of this conjecture for uncountable sets. 
This variant asserts that if $C\subset\R$ is an uncountable Borel set, then there is a full measure subset $B\subset \mathbb{R}$ such that $C$ may not be affinely embedded into $B$. 
Since any uncountable Borel set contains a Cantor set as a subset, to prove this variant of the conjecture, it would be enough to verify it for all topological Cantor sets $C\subset[0,\tfrac14]$. Partial results under various conditions on the size or geometry of the Cantor set $C$ have been obtained, (see e.g. \cites{JLM2025, ShmerkinYavicoli2025, IosevichYavicoli2026}), but in full generality, the conjecture is wide open.

To verify the conjecture for a given Cantor set $C$, one needs to find a set $B\subset\R$ of zero Lebesgue measure such that
\begin{equation}\label{full-measure-univ}B\cap(\lambda C+t)\neq\varnothing\,,
\end{equation}
for all $\lambda,t\in\mathbb{R}$, $\lambda\neq0$. Moreover, it would be enough to find non-empty open intervals $I,J\subset\R$, such that \eqref{full-measure-univ} holds for all $(t,\lambda)\in I\times J$. Indeed, if such $I,J$ are found, then \eqref{full-measure-univ} holds for $B'$ formed as a countable union of appropriately scaled and translated copies of $B$.

Let us now consider the dyadic tree $\T=\bigcup_{n=0}^\infty\{0,1\}^n$, and let $\pi\colon\partial\T\to[0,1]$ denote the projection $(\varnothing,x_1,x_2\ldots)\mapsto\sum_{n\in\N}x_n2^{-n}$. Given a compact set $K\subset[0,1]$, let 
\[\T_K=\bigcup_{n=0}^\infty\left\{(\varnothing,x_1,\ldots,x_n)\,:\,\sum_{n\in\mathbb{N}}x_n 2^{-n}\in K\right\}\subset\T\] denote the subtree coding $K$ in the dyadic base. Note that for each subtree $\T'\subset\T$ without finite rays, there is a unique compact set $K\subset[0,1]$ such that $\T'=\T_K$. 
Given a sequence of weights $\{q_n\}_{n\in\N}\subset(0,1)$, let $\T_A(\omega)$ denote the connected component of the root in the $\bq=\{q_{|e|}\}_{e\in\E}$-percolated subtree, from which all finite rays have been removed. By definition, $A=A(\omega)$ is then the fractal percolation set obtained using the level-dependent percolation weights $\{q_n\}$.

\begin{prop}\label{Erdos application}
Let $C\subset[0,1]$ be a Cantor set. There is a constant $c>0$ and a sequence $\{q_n\}_{n\in\N}\subset(0,1)$ (allowed to depend on $C$) with $\prod_{n\in\N} q_n=0$, such that for all $\tfrac14\le\lambda\le\tfrac12$, $0<t<\tfrac12$,
\begin{equation}\label{main theorem applied to a fixed copy}
    \p_{\bq}[o\longleftrightarrow \partial\T_{\lambda C+t}]>c\,.
\end{equation} 
\end{prop}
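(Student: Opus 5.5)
We follow the route of Case II in the proof of Theorem \ref{main theorem}(ii), with the unperturbed model taken to be $p_e\equiv 1$ on the dyadic tree. The one new ingredient is uniformity over all affine copies $C'=\lambda C+t$. The idea is to fix one non-atomic measure on $C$ and transport it to every copy. We then aim for a single bound on the $\bq$-energy of the transported measure on $\partial\T_{C'}$, valid for all $(\lambda,t)\in[\tfrac14,\tfrac12]\times(0,\tfrac12)$. By \eqref{connectivity cap relation}, such a bound $M$ gives $\p_\bq[o\longleftrightarrow\partial\T_{C'}]\ge \cpy_\bq(\partial\T_{C'})\ge 1/M$, so $c=1/(2M)$ works.

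\emph{Step 1: a good measure on $C$.} Since $C$ is a Cantor set, it carries a non-atomic Borel probability measure $\nu$. Let $\nu_{\lambda,t}$ be its push-forward under $x\mapsto\lambda x+t$, and lift it to $\partial\T_{C'}$ through $\pi$. Points with two dyadic codings form a countable set, so they have $\nu_{\lambda,t}$-measure zero and the lift is unambiguous. With $Q_n=\prod_{i\le n}q_i$ we have $\Psi_\bq(v)=Q_{|v|}^{-1}$. Arguing as in Case I, the energy of the lifted measure $\mu_{\lambda,t}$ is
\[
\e_\bq(\mu_{\lambda,t})=\sum_{n\ge0}Q_n^{-1}\sum_{|v|=n}\mu_{\lambda,t}^2(\Theta_v)\le \sum_{n\ge 0}Q_n^{-1}\sum_{|v|=n}\mu_{\lambda,t}(\partial\T_v)^2 .
\]
Each $\partial\T_v$ with $|v|=n$ projects into a dyadic interval of length $2^{-n}$. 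Because $\lambda\ge\tfrac14$, its preimage under the affine map is an interval of length at most $2^{2-n}$. Hence
\[
\sum_{|v|=n}\mu_{\lambda,t}(\partial\T_v)^2\le \sup_{|I|\le 2^{2-n}}\nu(I)=:\omega(2^{2-n}),
\]
where $I$ ranges over intervals. The right-hand side does not depend on $(\lambda,t)$.

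\emph{Step 2: choosing $\bq$.} Since $\nu$ is non-atomic on a compact set, its modulus of continuity satisfies $\omega(r)\to0$ as $r\to0$; this follows from uniform continuity of the distribution function of $\nu$. The obstacle is that $\omega(2^{-n})$ need not be summable. For instance, if $C$ has Hausdorff dimension $0$, it decays slower than any power, so $a_n:=\omega(2^{2-n})$ cannot be fed directly into Lemma \ref{elementary lemma}. Instead we pass to a subsequence. Choose $n_1<n_2<\dots$ with $\omega(2^{2-n_k})\le 2^{-2k}$. Define $Q_n^{-1}$ to equal $2^{k}$ on $(n_k,n_{k+1}]$, interpolated geometrically so that $Q_n^{-1}$ increases strictly. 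Set $q_n=Q_n/Q_{n-1}\in(0,1)$. The bound from Step 1 then becomes
\[
\sum_k 2^{k+1}(n_{k+1}-n_k)\,\omega(2^{2-n_k}),
\]
and the factor $n_{k+1}-n_k$ may destroy convergence. To fix this, we make the gaps $n_{k+1}-n_k$ grow slowly compared with the decay of $\omega$. Concretely, we choose $n_k$ inductively so that $\omega(2^{2-n_k})\le 2^{-2k}/(n_{k+1}-n_k)$. This is possible by first fixing the gaps $g_k=n_{k+1}-n_k$ and then, since $\omega\to0$, taking $n_k$ large enough at each stage; we can also arrange $g_k\to\infty$. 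With these choices the bound is at most $\sum_k 2^{1-k}<\infty$.

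\emph{Step 3: conclusion.} The resulting $M:=\sup_{\lambda,t}\e_\bq(\mu_{\lambda,t})$ is finite. Moreover $Q_n\to0$, so $\prod q_n=0$, and every $q_n$ lies in $(0,1)$. Lyons' bound \eqref{connectivity cap relation} then yields \eqref{main theorem applied to a fixed copy} with $c=1/(2M)$. The main difficulty is exactly this uniformity: energy bounds for individual measures are not enough. We handle it by replacing $\tilde\mu$ with a modulus-of-continuity estimate that holds for all copies at once, and then running a diagonal, Lemma \ref{elementary lemma}-type construction on that modulus.
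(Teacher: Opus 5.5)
There is a genuine gap. Step~2 cannot work, and the reason already lies in Step~1. Since $Q_n^{-1}\ge Q_0^{-1}=1$, your majorant satisfies $\sum_n Q_n^{-1}\omega(2^{2-n})\ge\sum_n\omega(2^{2-n})$. It is therefore infinite for \emph{every} $\bq$ as soon as $\omega(2^{-n})$ is not summable, which is exactly the case you set out to handle. Concretely, your requirement $\omega(2^{2-n_k})\le 2^{-2k}/(n_{k+1}-n_k)$, together with the monotonicity of $\omega$, gives $\sum_{n>n_1}\omega(2^{2-n})\le\sum_k (n_{k+1}-n_k)\,\omega(2^{2-n_k})<\infty$. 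So no such $(n_k)$ exists in that case. The recipe ``fix the gaps, then take $n_k$ large'' is also circular, since $n_1$ and the gaps determine every $n_k$. A cleverer choice of $\nu$ does not rescue the scheme either. The bound $\mu^2(\Theta_v)\le\mu(\partial\T_v)^2$ already turns the energy into something at least $\sum_n\mu^2(\Xi_n)=\iint(|\xi\wedge\eta|+1)\,d\mu\,d\mu$, and this dominates a constant multiple of the logarithmic energy of $\mu\circ\pi^{-1}$. For a Cantor set of zero logarithmic capacity (such sets exist), that quantity is infinite for every measure on it.

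The missing idea is to sum by parts before estimating. Write $m_n=\mu_{\lambda,t}^2(\Xi_n)$, so that $m_0=1$ and $m_n\downarrow 0$. The partial sums of the energy then satisfy $\sum_{n=0}^N Q_n^{-1}(m_n-m_{n+1})\le 1+\sum_{n=1}^N (Q_n^{-1}-Q_{n-1}^{-1})\,m_n$. In words: with $p_e\equiv1$ the unperturbed energy is exactly $1$, and one pays only for the \emph{increments} of $Q_n^{-1}$, not for its values. Your estimate $m_n\le\omega(2^{2-n})$ is uniform in $(\lambda,t)$. It then suffices to choose $d_n=Q_n^{-1}-Q_{n-1}^{-1}>0$ with $\sum_n d_n=\infty$ and $\sum_n d_n\,\omega(2^{2-n})<\infty$, which needs only $\omega(2^{-n})\to0$. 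For example, take $d_n=1$ for $n<n_1$, and let $d$ have total mass $1$ on each block $[n_k,n_{k+1})$, where $\omega(2^{2-n})\le 2^{-k}$ for $n\ge n_k$.

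Repaired this way, your route is a genuine alternative to the paper's. The paper obtains $\bq$ from Theorem A(ii) for $\T_C$ alone. It then transfers the energy bound to all affine copies through the Pemantle--Peres comparison with the Euclidean kernel $\bar\Psi$ and the scaling estimate $\bar\Psi(\lambda x,\lambda y)\le(\inf_n q_n)^{-2}\bar\Psi(x,y)$. Yours gets uniformity directly from the modulus of continuity of $\nu$, and needs neither the Euclidean comparison nor $\inf_n q_n>0$.
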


\begin{proof}
Applying the second part of Theorem \ref{main theorem} for the tree $\T_C$ and the percolation weights $p_e\equiv 1$, it follows that for some $\{q_n\}_{n\in\N}$ with $\prod_{n\in\N} q_n=0$, 
\begin{equation}\label{perc C fixed}
    \p_{\bq}[o\longleftrightarrow \partial\T_{C}]>0\,. 
\end{equation}

For a measure $\bar\nu$ on $\R$, define
\[\bar\e_\bq(\bar\nu):=\int\int\bar\Psi(x,y)\,d\bar\nu(x)\,d\bar\nu(y)\,,\]
where $\bar\Psi(x,y)=\prod\limits_{1\leq i\leq -\log_2|x-y|}
q_i^{-1}$. It is then a standard procedure to check (see e.g. \cite{PemantlePeres1996}*{Section 3.1}) that for some constant $1\le C=C_{\bq}<\infty$, 
\begin{equation}\label{from dyadic to euclidean}
    \frac{\e_\bq(\nu)}{C}\le\bar\e_\bq(\nu\circ\pi^{-1})\le C\e_\bq(\nu)\,,
\end{equation}
whenever $\nu$ is a measure on $\partial\T$ (recall \eqref{eq:energy_def}).
Moreover, since $\bar{\Psi}$ is translation invariant and for all  $\lambda>\tfrac14$, $\bar\Psi(\lambda x,\lambda y)\le (\inf_{n\in\N}{q_n})^{-2}\bar\Psi(x,y)$, it follows that for all $\tfrac14<\lambda<\tfrac12$, $0<t<\tfrac12$, 
\begin{equation}\label{comparable energies}
    \frac{\e_\bq(\nu)}{C}\le\bar\e_\bq(\nu\circ\pi^{-1}\circ h_{\lambda,t})\le C'\e_\bq(\nu)\,,
\end{equation}
where $h_{\lambda, t}(x)=(x-t)/\lambda$ and $C'=(\inf_\N q_n)^{-2}\cdot C$. 
Let $\mu$ be a measure supported on $\partial\T_C$ such that $\e_\bq(\mu)<\infty$.
Then $\mu_{\lambda,t}:=\mu\circ\pi^{-1}\circ h_{\lambda,t}\circ\pi$ is a measure supported on $\T_{\lambda C+t}$ and combining \eqref{from dyadic to euclidean} and \eqref{comparable energies},
$\frac{\e_\bq(\mu)}{C''}\le\e_\bq(\mu_{\lambda,t})\le C''\e_\bq(\mu)$ for some finite constant $C''$. The claim is now an immediate consequence of \eqref{connectivity cap relation}.
\end{proof}

\begin{cor}
       Given a Cantor set $C$, then there is a set $B$ of zero Lebesgue measure, such that $B\cap(\lambda C+t)\neq\varnothing$ for almost all $t,\lambda$.    
\end{cor}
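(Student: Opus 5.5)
Take $B:=A(\omega)$, the fractal percolation set built from the sequence $\bq$ of Proposition \ref{Erdos application}, for a single realisation $\omega$. Two things must be checked: that $B$ has Lebesgue measure zero almost surely, and that for a positive proportion of realisations $B$ meets almost every affine copy in the window $\tfrac14\le\lambda\le\tfrac12$, $0<t<\tfrac12$. We then upgrade to full-measure sets of parameters by taking countable unions, as explained before \eqref{full-measure-univ}.

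\emph{Step 1: $B$ is Lebesgue null.} For a point $x\in[0,1]$, the event $x\in A$ requires every edge along the dyadic ray coding $x$ to be retained. This has probability at most $\prod_{n\le N}q_n\to 0$. By Fubini, $\E\,\mathrm{Leb}(A)=\int_0^1\p(x\in A)\,dx=0$, so $\mathrm{Leb}(A)=0$ almost surely. For dyadic rationals, which have two codings, the same bound holds up to a factor $2$.

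\emph{Step 2: intersection for a positive measure set of parameters.} Note that $\lambda C+t\subset[0,1]$ in the window. The event $[o\longleftrightarrow\partial\T_{\lambda C+t}]$ says that some ray of $\T_{\lambda C+t}$ survives, that is, $A\cap(\lambda C+t)\neq\varnothing$. Boundary points coding the same real make this identification harmless, because $\T_K$ contains all codings of points of $K$.

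Let $W=[\tfrac14,\tfrac12]\times(0,\tfrac12)$ and let $G(\omega)=\{(\lambda,t)\in W: A\cap(\lambda C+t)\neq\varnothing\}$. Joint measurability of $G$ should follow from $A$ and $C$ being compact, since the set of $(\omega,\lambda,t)$ with $A\cap(\lambda C+t)\neq\varnothing$ is closed in $(\lambda,t)$ for fixed $\omega$. By Fubini and \eqref{main theorem applied to a fixed copy},
\[\E\,\mathrm{Leb}(G)>c\,\mathrm{Leb}(W).\]
Hence with positive probability $\mathrm{Leb}(G)>0$. This alone does not yet give full measure.

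\emph{Step 3: upgrading to almost all $(\lambda,t)$.} This is the main obstacle. I would use independent copies $A_1,A_2,\dots$ (independent realisations of the percolation) and set $B_0=\bigcup_j A_j$, which is still null. For fixed $(\lambda,t)\in W$, the events $\{A_j\cap(\lambda C+t)\neq\varnothing\}$ are independent, each with probability $>c$ uniformly in $(\lambda,t)$. So with probability one some $A_j$ meets $\lambda C+t$. Fubini on $\Omega^{\N}\times W$ then shows that, almost surely, $B_0$ meets $\lambda C+t$ for Lebesgue-a.e. $(\lambda,t)\in W$; fixing one such realisation gives a deterministic null set $B_0$.

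Finally, to cover all $\lambda\neq0$ and $t\in\R$, note that $\lambda C+t$ meets $sB_0+r$ iff $(\lambda/s)C+(t-r)/s$ meets $B_0$. Countably many choices of $s\in\pm 2^{\Z}\cdot(\text{rationals near }1)$ and $r\in\Q$ map the windows onto a cover of $(\R\setminus\{0\})\times\R$, and these maps preserve null sets of parameters. Then $B=\bigcup_{s,r}(sB_0+r)$ is null and meets $\lambda C+t$ for almost all $(\lambda,t)$.
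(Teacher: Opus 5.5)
Your proposal is correct and follows the paper's argument: take a countable union $B_0$ of independent realisations of the percolation set $A$ from Proposition \ref{Erdos application}, use the uniform lower bound $c$ together with independence (the paper calls this the second Borel--Cantelli lemma) to get $\p(B_0\cap(\lambda C+t)\neq\varnothing)=1$ for each parameter in the window, and then fix a single realisation by Fubini. You also fill in details the paper leaves implicit: that $A$ is Lebesgue-null because $\prod_n q_n=0$, the joint measurability needed for Fubini, and the extension from the window to all $\lambda\neq 0$, $t\in\R$ via countably many affine copies, which the paper handles through its remark preceding \eqref{full-measure-univ}.
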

    
\begin{proof}
    Let $\{q_n\}_{n\in\N}$ be given by the previous proposition, and let $A=A(\omega)$ be the corresponding fractal percolation set.
    Note that $o\longleftrightarrow\partial\T_{\lambda C+t}$ and $A\cap(\lambda C+t)\neq\varnothing$ are the same event. If $B$ is a countably infinite union of independent realisations of $A$, using the previous proposition and the second Borel-Cantelli lemma, it follows that
\[\p_{ \bq}(B\cap(\lambda C+t)\neq\varnothing)=1\]
for almost all pairs $(\lambda,t)$.
\end{proof}

\begin{rmk}
\begin{enumerate}
    \item Kolountzakis has obtained related results for arbitrary infinite sets $C$.
    In \cite{Kou1997}, it is shown that there are sets $B$ of arbitrary small (but positive) Lebesgue measure such that $(\lambda C+t)\cap B\neq\varnothing$ for almost all $(\lambda,t)$. The complement of the set $B$ is a modified fractal percolation set on $[0,1]$ with $q_n$ tending to one so fast that the Lebesgue measure of the percolation set is positive. Using a deterministic construction, Koulontzakis provides a set of arbitrary small positive Lebesgue measure such that, for almost every $t$, $(\lambda B+t)\cap C\neq\varnothing$, holds for all $\lambda\neq 0$. For Cantor sets, the set $B$ may be taken to have zero Lebesgue measure. The last statement, which is more general than Corollary \ref{Erdos application}, is a special case of \cite{ShmerkinYavicoli2025}*{Corollary 1.5}.
\item Homogeneous fractal percolation sets $A$, i.e. the ones constructed using a constant sequence $q_n=q$, also bear some interest for the Erd\H{o}s similarity problem. Namely, they may be used to show that each set of positive Hausdorff dimension satisfies the conjecture, as shown by Jun, Lai, and Mooroogen \cite{JLM2025}*{Corollary 3.8} using results from \cite{ShmerkinSuomala2018}. The
proof of this fact rests on two observations that are valid if $q<1$ is chosen large enough depending on the dimension of $C$:\\
\emph{$\alpha$)} For a fixed parameter $(\lambda,t)$, the intersection $(\lambda C+t)\cap A$ is 'large' with high probability, where largeness is measured in some appropriate way.\\ 
\emph{$\beta$)} The 'largeness' of $(\lambda C+t)\cap A$ is a continuous function of $(\lambda,t)$.\\
In \cite{JLM2025} and \cite{ShmerkinSuomala2018}, a quantitative size condition (positive Hausdorff dimension) for $C$ was used to verify both properties $\alpha$) and $\beta$). However, the Proposition \ref{Erdos application} shows that the Property $\alpha$) is valid for all $\tfrac14\le\lambda\le\tfrac12$, $0<t<\tfrac12$ under the minimal assumption that $C$ is a Cantor set. Here, 'largeness' is measured in terms of $\cpy_{\bq}(\partial{\T_{\lambda C+t}})$. It remains to be seen if $\beta$) could also be shown to hold under the same assumption, leading to the resolution of the Erd\H{o}s similarity problem for Cantor sets.
\item Theorem \ref{main theorem} (ii) concerns the stability of percolation clusters under perturbations $q_e$ of the probability weights. 
The validity of $\beta$) in the setting of Proposition \ref{Erdos application} may also be considered a \emph{stability of percolation} type problem, but now the perturbations are induced by scaling and translating the set $C$: the tree $\T_C$ gets replaced by $\T_{\lambda C+t}$, where $\lambda$ and $t$ vary continuously.
\end{enumerate}
\end{rmk}

\section*{Acknowledgements}

We thank Jeff Steif for many valuable suggestions on an early draft of the manuscript.

\end{document}